\def\A{{\cal A}}
\def\T{{\cal T}}
\def\P{{\cal P}}
\def\C{{\cal C}}
\def\RSK{\operatorname{RSK}}
\def\la{\lambda}
\def\bxi{\bar\xi}
\def\bet{\bar\eta}
\def\eps{\varepsilon}
\def\si{\sigma}
\def\sin{{\mathfrak S}_\infty}
\def\sn{{\mathfrak S}_n}
\def\shape{\operatorname{sh}}
\def\Y{{\mathbb Y}}
\def\Prob{\operatorname{Prob}}
\newtheorem{lemma}{Lemma}
\newtheorem{proposition}{Proposition}
\newtheorem{theorem}{Theorem}
\newtheorem{definition}{Definition}
\newtheorem{corollary}{Corollary}
\vershik\url{avershik@pdmi.ras.ru}
\natalia\url{natalia@pdmi.ras.ru}
\author{A.~M.~Vershik\thanks{%
St.~Petersburg Department of Steklov Institute of Mathematics; St.~Petersburg State University; Institute for Information Transmission Problems. E-mail: \vershik.}
\and N.~V.~Tsilevich\thanks{%
St.~Petersburg Department of Steklov Institute of Mathematics.
E-mail: \natalia.}}
\title{Ergodicity and totality of partitions associated with the RSK correspondence}
\date{}
\begin{document}
\maketitle

\begin{abstract}
We study asymptotic properties of sequences of partitions
($\sigma$\nobreakdash-algebras) in spaces with Bernoulli measures associated with the Robinson--Schensted--Knuth correspondence.

{\bf Key words:} RSK correspondence, youngization, ergodicity of a~sequence of partitions, totality of a sequence of partitions.
\end{abstract}

\section{Introduction}

We study dynamic properties of the Robinson--Schensted--Knuth correspondence (RSK) when it is successively applied to growing sequences of symbols. In particular, we are interested in asymptotic properties of this correspondence. Apparently, the ``dynamic'' approach to the RSK correspondence first appeared in~\cite{VK}, where it was discovered that applying the RSK algorithm   to an infinite sequence of independent symbols from a linearly ordered set defines a correspondence between Bernoulli measures (i.e., ensembles of independent sequences of symbols) and central (Markov) measures on the path space of a certain graph (the Young graph). In other words, the RSK correspondence defines measure-preserving homomorphisms from Bernoulli spaces to Markov path spaces. Thus, the question inevitably arises of whether or not this homomorphism is an isomorphism~$\bmod\,0$. The affirmative answer to this question was obtained in the relatively recent important papers~\cite{RS,Sniady}; their approach relies on the Sch\"utzenberger's
jeu de taquin and is rather technically involved.

The first author~\cite{21} suggested a program for solving these problems for a~certain class of graphs, including an elaboration of the ergodic method~\cite{V74} for finding invariant measures and the so-called {\it bernoullization} of graphs. This approach uses techniques of the theory of filtrations (decreasing sequences of  $\sigma$\nobreakdash-algebras)~\cite{filtr}; in particular, it has led to a new problem of characterization of  {\it de Finetti-like} filtrations, i.e., filtrations for which every ergodic central measure is a Bernoulli measure. Note in this regard that this paper (as well as~\cite{Sniady})  deals with the properties and structure of some ergodic central measures, those originating from Bernoulli measures; the fact that they exhaust all central measures with finitely many frequencies for the Young graph does by no means follow from these considerations. However, the methods suggested in~\cite{21} allowed the authors to prove, in a~paper in preparation, that every central measure of this type is associated with a~Bernoulli measure in the above sense. This is how a long awaited purely combinatorial proof of Thoma's theorem (see~\cite{Three}) should appear.

In this paper, we study only a part of the general problem in the simplest case of a linearly ordered set with finitely many symbols, and prove two facts in a~sense dual to each other: the totality of the coplactic (= dual Knuth) equivalence and the ergodicity of the so-called Young filtration, i.e., the tail filtration determined by the $Q$-tableaux in the RSK corresponcence. Our arguments, on the one hand, give another proof of the corresponding results from~\cite{Sniady} and, on the other hand, can be applied in a much more general situation. We consider separately the case of two letters, because it is illustrative and serves as the base case for an induction in the general case.

The reader is assumed to be familiar with the RSK correspondence and its basic properties (see, e.g.,~\cite{Fulton}); for background on the representation theory of the infinite symmetric group (the Young graph, central measures, Thoma parameters, etc.), see, e.g.,~\cite{Kerov}; for that on the theory of measurable partitions and filtrations, see, e.g.,~\cite{filtr}.

\section{Youngization} Let $\A=\{1,2,\ldots,k\}$ be a finite alphabet. Consider the space  $X=\A^\infty$ of infinite words in the alphabet~$\A$ with Bernoulli measure $m_p^\infty$, where $p=(p_1,p_2,\ldots,p_k)$, $p_i=\Prob(i)$, and $p_1\ge p_2\ge\ldots\ge p_k>0$.

Let $\T$ be the set of infinite standard Young tableaux (or, which is the same, the set of infinite paths in the Young graph~$\Y$) and $\mu_p$ be the central measure on~$\T$ with Thoma parameters $(p,0,0)$. Note that the measure~$\mu_p$ is supported by the subset of tableaux with at most $k$~rows.

By $\RSK(w)=(P(w),Q(w))$ we denote the result of applying the RSK algorithm to a finite sequence (word)~$w$ in the alphabet~$\A$. Thus, $P(w), Q(w)$ is a pair of Young tableaux of the same shape (with at most $k$~rows), which will be denoted by~$\shape(w)$; the tableau~$P(w)$ is semistandard, while the tableau~$Q(w)$ is standard. Given an infinite sequence $x\in X$, denote by $[x]_n=(x_1,\ldots,x_n)\in \A^n$ its initial segment of length~$n$, and let $\{x\}_{n+1}:=(x_{n+1},x_{n+2},\ldots)\in\A^\infty$ be its $(n+1)$-tail. Also, denote by~$P_n(x)$ and~$Q_n(x)$ the tableaux obtained by applying the RSK algorithm to the initial segment of length~$n$ of a sequence~$x$: $\RSK([x]_n)=(P_n(x),Q_n(x))$.

Following \cite{VK}, we introduce a map from the space of infinite sequences to the space of infinite Young tableaux.

\begin{definition}
{\rm Successively apply the RSK algorithm to the initial segments~$[x]_n$ of a sequence $x\in X$. It is clear from the construction of the algorithm that $\lim\limits_{n\to\infty}Q_n(x)=:Q(x)$ is an infinite standard Young tableau; denote it by~$\pi(x)$.
The resulting map
\begin{equation}\label{hom}
\pi:(X,m_p^\infty)\to(\T,\mu_p)
\end{equation}
is called the \emph{youngization}.}
\end{definition}

In~\cite{VK} it is proved that the youngization~\eqref{hom} is a homomorphism of measure spaces.

\section{The sequences of Young partitions on a Bernoulli space. The main theorems}

The following measurable partitions are defined in a natural way on the
space~$(X,m_p^\infty)$ of infinite Bernoulli sequences:
\begin{itemize}
\item the {\it cylinder} partition $\si_n$ of level~$n$, whose element is a set (of finite measure) of sequences with a fixed initial segment of length~$n$ and arbitrary tail;
\item the {\it tail} partition $\tau_n$ of level~$n$, whose element is a (finite) set of sequences with a fixed $(n+1)$-tail  and arbitrary beginning.
\end{itemize}
We will call them the {\it Bernoulli} cylinder and tail partitions. They have the following properties:
\begin{itemize}
\item the sequence of partitions $\si_n$ is monotonely increasing and converges in the weak topology to the partition~$\eps$ into separate points;
\item the sequence of partitions $\tau_n$ is monotonely decreasing and converges in the weak topology to the trivial partition~$\nu$;
\item for every $n$, the partitions $\tau_n$~and~$\si_n$ are independent with respect to the Bernoulli measure~$m_p^\infty$.
\end{itemize}

Now consider the following measurable partitions on the space~$\T$ of infinite Young tableaux:
\begin{itemize}
\item the {\it cylinder} partition $\xi_n$ of level~$n$, whose element is a set (of finite measure) of infinite paths in the Young graph (i.e., infinite Young tableaux) with a fixed initial segment of length~$n$ and arbitrary tail.
\item the {\it tail} partition $\eta_n$ of level~$n$, whose elements is a (finite) set of infinite paths in the Young graph with a fixed $n$-tail and  arbitrary beginning.
\end{itemize}

\begin{definition}
{\rm The {\it Young cylinder partition} and {\it Young tail partition} of the space~$X$ of infinite sequences are the partitions
 $\bxi_n:=\pi^{-1}\xi_n$ and ${\bet_n:=\pi^{-1}\eta_n}$, respectively, i.e., the preimages of the cylinder and tail partitions on Young tableaux under the youngization~$\pi$.
The decreasing sequence of partitions~$\{\bet_n\}$ will also be called the {\it Young filtration}.}
\end{definition}

Thus, $x\sim_{\bxi_n}y$ $\iff$ $Q([x]_n)=Q([y]_n)$, and $x\sim_{\bet_n}y$ $\iff$
$\shape([x]_N)=\shape([y]_N)$ for $N\ge n$. Obviously, $\bxi_n\prec\sigma_n$.

To begin with, we describe the structure of Young partitions. Recall (see, e.g.,~\cite{Fulton}) that the  {\it Knuth equivalence} (or {\it plactic}) {\it class}~$\P_t$ and the {\it dual Knuth equivalence} (or {\it coplactic}) {\it class}~$\C_t$ corresponding to a given Young tableau~$t$ of size~$n$ is the set of all words~$u$ of length~$n$ such that $P(u)=t$ and $Q(u)=t$, respectively.

\begin{theorem}\label{th:part}
The Young partitions on the space~$X$ can be described as follows.
\begin{itemize}
\item The elements of~$\bxi_n$ are indexed by the standard Young tableaux~$t$ of size~$n$ and coincide with the coplactic classes~$\C_t$.
\item The elements of~$\bet_n$ are indexed by the pairs~$(t,y)$ where $t$ is a semistandard Young tableau of size~$n$ and $y$~is an infinite word in the alphabet~$\A$ and have the form
$$\{x\in X: [x]_n\in\P_t,\,\{x\}_{n+1}=y\};
$$
in other words, this is the set of all sequences whose initial segment of length~$n$ belongs to a given plactic class and the tail coincides with a~given infinite sequence.
\end{itemize}
\end{theorem}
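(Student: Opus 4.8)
The plan is to treat the two bullets separately. I will use three standard RSK facts: $Q(u)=Q(v)$ iff $u,v$ are dual Knuth (coplactic) equivalent; $P(u)=P(v)$ iff $u,v$ are Knuth (plactic) equivalent; and $P(uw)$ is obtained from $P(u)$ by row-inserting $w$, so that $\shape(uw)$ depends on $u$ only through $P(u)$. The first bullet is then immediate: by the description recalled above, $x\sim_{\bxi_n}y$ iff $Q([x]_n)=Q([y]_n)$, which by the coplactic characterization means exactly that $[x]_n,[y]_n$ lie in one coplactic class $\C_t$ with $t=Q([x]_n)$, the tails being unconstrained; hence the blocks of $\bxi_n$ are the cylinders over the coplactic classes and are indexed by the standard tableaux $t$ of size $n$.

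For the second bullet, let $\rho_n$ be the partition whose block through $x$ is $\{z:[z]_n\in\P_{P([x]_n)},\ \{z\}_{n+1}=\{x\}_{n+1}\}$; I must show $\rho_n=\bet_n$. One inclusion is elementary. If $P([x]_n)=P([y]_n)$ and $\{x\}_{n+1}=\{y\}_{n+1}$, then for every $N\ge n$ we have $P([x]_N)=P([x]_n)\leftarrow(x_{n+1}\cdots x_N)=P([y]_n)\leftarrow(y_{n+1}\cdots y_N)=P([y]_N)$, whence $\shape([x]_N)=\shape([y]_N)$ and $x\sim_{\bet_n}y$. Thus $\rho_n$ refines $\bet_n$, and it remains to prove the reverse inclusion, namely that the shape path $(\shape([x]_N))_{N\ge n}$ recovers both the plactic class of $[x]_n$ and the tail $\{x\}_{n+1}$.

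This reverse inclusion is the crux, and it holds only $\bmod\,0$: the constant sequences $(1,1,\dots)$ and $(2,2,\dots)$ share the single-row $Q$-tableau, hence the same shape path, yet have different $P([\cdot]_n)$ and different tails. The difficulty is that the position of the box added at a single RSK step does not pin down the inserted letter -- a box in a given row can be produced by any of several letters, depending on how many un-bumped larger entries are currently waiting in that row. My plan is to reconstruct the tail, and then $P([x]_n)$, from the \emph{whole} shape path almost everywhere. For $k=2$ (the base case) the shape path tells, at each step, whether the new box lands in row~$1$ or row~$2$; a row-$2$ box forces the inserted letter to be $1$, and the residual ambiguity is governed by a single queue, the number of $2$'s waiting in row~$1$. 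The hypothesis $p_1\ge p_2$ makes this queue recurrent, returning to the empty state infinitely often a.s., and at each such return any alternative reading of the ambiguous steps becomes incompatible with a subsequent row-$2$ box; hence the tail, and then $t=P([x]_n)$, is a.s.\ uniquely determined. For general $k$ I would induct on the alphabet size, peeling off the largest letter so that the multi-row dynamics reduces, level by level, to the two-letter queue, with $p_1\ge\cdots\ge p_k>0$ guaranteeing that every queue is recurrent and that every letter occurs infinitely often. The hardest part is exactly this a.e.\ reconstruction: controlling, over the infinite horizon, how the per-step ambiguities interact, and showing that recurrence of the queues removes every spurious preimage on a full-measure set. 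Granting it, $\rho_n=\bet_n\bmod 0$, which is the stated description of $\bet_n$.
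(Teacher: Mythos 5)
You handle the first bullet and the easy inclusion of the second correctly (the paper also dismisses the first assertion as obvious), and you are right that the hard inclusion can only hold $\bmod\,0$ --- your counterexample with the two constant sequences is valid. But your proposal stops exactly where the theorem begins: the a.e.\ reconstruction that you explicitly ``grant'' is the entire content of the paper's proof (Proposition~\ref{prop:young2}, Lemmas~\ref{l:disappear} and~\ref{l:tails}), so what you have is the paper's strategy without its substance. Two concrete pieces are missing. First, the recurrence of your ``queue'' is a genuine lemma, not something read off from $p_1\ge\dots\ge p_k>0$: the number $m_n(\ell,x)$ of letters $\ell$ in the first row of $P_n(x)$ is \emph{not} a random walk (the probability of a decrease is $p_{a_n}+\dots+p_{\ell-1}$, where $a_n$ depends on the current tableau), and in the critical case $p_{\ell-1}=p_\ell$ the natural comparison walk has zero drift; the paper (Lemma~\ref{l:disappear}) handles this by proving recurrence of an auxiliary lazy walk via the Fourier criterion and then dominating $m_n$ by it through an explicit coupling. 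Second, ``at each such return any alternative reading of the ambiguous steps becomes incompatible with a subsequent row-2 box'' is not an argument. The paper never reconstructs anything; it compares the two given sequences directly: for $k=2$, at the first index $m$ where the tails differ one may take $x_m=1$, $y_m=2$; the $2$ is automatically free, equality of shapes forces the $1$ to be free as well, and then $\{y\}_{m+1}$ could never contain a free $1$ --- an event of probability zero; an extra free $2$ in one of the prefixes is excluded similarly, since a.s.\ it eventually gets paired, at which moment the shapes would disagree.

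Your one-line induction (``peel off the largest letter'') also underestimates the step $k-1\mapsto k$. In Lemma~\ref{l:tails} the induction hypothesis is applied \emph{twice}, to two different derived sequences: first to the sequences of letters bumped into the second row (alphabet $\{2,\dots,k\}$) --- since all rows below the first agree in shape for all times, this gives that the subtableaux below the first row agree at time $n$ and that the bumped sequences coincide thereafter; then Lemma~\ref{l:disappear} is invoked to exclude a permanent discrepancy $m_n(k,x)\ne m_n(k,y)$ in the number of $k$'s in the first row (such a discrepancy could only decrease if $x$ bumped a $k$ while $y$ bumped a smaller letter, which the first application rules out, yet it must vanish because $m_j(k,x)=0$ for some $j>n$); only after this can one discard all letters equal to $k$ and apply the induction hypothesis a second time, on the alphabet $\{1,\dots,k-1\}$. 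So your outline does follow the paper's route, but the two pillars --- the coupling lemma and this two-fold induction --- are precisely what you left unproved, and they are where all the difficulty lives.
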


The first assertion of this theorem is obvious, and the second one will be proved in the next section (see Lemma~\ref{l:tails}).

Recall that a decreasing sequence of partitions (filtration) in a measure space is said to be {\it ergodic} if it converges in the weak topology to the trivial partition~$\nu$ (into a single nonempty set). In turn, an increasing sequence of partitions in a measure space is said to be {\it total} if it converges in the weak topology to the partition~$\eps$ into separate points. Thus, the ergodicity of a sequence of partitions means that there is no nonconstant measurable function that is constant on the elements of all partitions, while the totality of a sequence of partitions means that for almost all pairs~$x,y$ of different points, $x$~and~$y$ will eventually fall in different elements of partitions.

Clearly, the sequence of partitions~$\bxi_n$ is increasing, while the sequence of partitions~$\bet_n$  is decreasing. Our purpose is to study the limiting partitions $\bxi:=\lim\limits_{n\to\infty}\bxi_n$ and $\bet:=\lim\limits_{n\to\infty}\bet_n$, namely, to prove the following theorem.

\begin{theorem}\label{th:main}
The Young partitions on the space~$X$ of infinite Bernoulli sequences have the following properties:
\begin{itemize}
\item the sequence of partitions $\bxi_n$ is total;
\item the sequence of partitions $\bet_n$ is ergodic.
\end{itemize}
\end{theorem}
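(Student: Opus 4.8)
The plan is to translate both assertions into statements about the \emph{shape process} $n\mapsto\shape([x]_n)$ and then to analyze that process probabilistically. Since the youngization $\pi$ sends $x$ to its recording tableau, which is nothing but the path of shapes, the $\sigma$-algebra $\bigvee_n\bxi_n$ generated by all the $Q_n$ coincides with $\pi^{-1}(\mathcal B_\T)$; hence totality of $\{\bxi_n\}$ is equivalent to $\pi$ being injective $\bmod\,0$, i.e.\ to the a.e.\ recovery of the word $x$ from its shape process. Dually, since preimage commutes with intersection, $\bigcap_n\bet_n=\pi^{-1}(\bigcap_n\eta_n)$ is the preimage of the tail partition of the shape process, so ergodicity of $\{\bet_n\}$ is equivalent to triviality of that tail, i.e.\ to ergodicity of the central measure $\mu_p$. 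I also record the complementarity $\bxi_n\vee\bet_n=\eps$ (knowing $Q_n$ and $P_n$ is the same as knowing $[x]_n$, and adding the tail gives $x$), which underlies the duality of the two claims.

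I would first settle the base case of a two-letter alphabet, where the shape has at most two rows and the shape process is governed by a single scalar: the length of the second row, which grows by one exactly when an inserted $1$ bumps a $2$ out of the first row. Writing $b_n$ for the number of $2$'s currently in the first row, one checks that $b_n$ is the random walk reflected at $0$ with up-step probability $p_2$ and down-step probability $p_1$, the second row increasing precisely at its down-steps; thus the shape process is equivalent to the down-step pattern $D$ of this walk. Because $p_1\ge p_2$ the walk has nonpositive drift and is recurrent, so it returns to $0$ infinitely often almost surely. I would exploit this to reconstruct the walk, hence the word, from $D$ alone: interpret $D$ by the \emph{minimal} admissible walk (each down-step matched to the most recent unmatched up-step, the unmatched first-row steps read as insertions of $1$ at height $0$), and then show that this minimal walk equals the true one. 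Indeed, both walks vanish at every true return to $0$ and carry the same down-steps, and on each excursion between consecutive zeros a deficit of up-steps in the minimal interpretation would force the walk negative; hence the two walks coincide, the word becomes an explicit function of the shape process, and totality follows.

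For ergodicity in the two-letter case, the tail partition of the shape process is the tail of the Markov chain $b_n$. This chain is irreducible and aperiodic (it has a self-loop at $0$, where an inserted $1$ merely reflects), and it is recurrent for $p_1\ge p_2$; by the classical triviality of the tail $\sigma$-algebra of such a chain (positive recurrent for $p_1>p_2$, null recurrent for $p_1=p_2$), every tail function is a.e.\ constant, which is exactly the ergodicity of $\{\bet_n\}$.

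Finally I would pass from two letters to a general alphabet by induction on $k$. The idea is to separate the first row from the rest: the subword of letters bumped out of the first row is itself fed, in order, into the sub-tableau formed by rows $2,\dots,k$, and by the standard recursive structure of RSK its shape process is that of a word in a smaller effective alphabet, while the first-row dynamics is a two-letter--type reflected process of the kind handled above. Since the underlying measure is Bernoulli, the relevant conditional and induced processes remain of product type, so the recurrence estimate (for totality) and the tail-triviality estimate (for ergodicity) can be propagated from $k-1$ to $k$. The main obstacle, and the step I expect to require the most care, is making this reduction precise: one must identify the joint law of the first-row reflected process and the bumped-out subword, verify that the latter is governed by a genuinely $(k-1)$-letter shape process to which the inductive hypothesis applies, and control their interaction so that recovery of the word and triviality of the tail are inherited. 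Everything else reduces, level by level, to the recurrence and tail properties of reflected walks established in the base case.
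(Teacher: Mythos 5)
Your treatment of the first bullet is essentially the paper's own argument in different clothing: reducing totality to a.e.\ reconstruction of the word from the shape process, and carrying out the two-letter reconstruction via the reflected walk (recurrence from $p_1\ge p_2$, so the true walk has no infinite final excursion and is the unique admissible walk with the given down-step set) is the same mechanism as the paper's bracketing of factors $21$ --- a free $2$ a.s.\ gets paired, while free $1$'s never do. The fatal problem is in the ergodicity half. Your identification ``the tail partition of the shape process is the tail of the Markov chain $b_n$'' is false, and it sits exactly where the difficulty of the theorem lies. The $\sigma$-algebra of $\bet_n$ is generated by the shapes $\shape([x]_N)$, $N\ge n$; in particular it contains the accumulated count $\la_2^{(n)}$ (the number of bumping steps before time $n$), which is \emph{not} a function of $(b_N)_{N\ge n}$. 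So the $\sigma$-algebra of $\bet_n$ is strictly larger than the future $\sigma$-algebra of the chain $(b_n)$ (mod~$0$ it is the join of $\sigma(\la_2^{(n)})$ with that future), and tail triviality of the recurrent chain $(b_n)$ --- true, but about the smaller filtration --- gives nothing about $\bigcap_n\bet_n$: the whole danger is that some asymptotic functional of the accumulated counts $\la_2^{(N)}$ survives in the tail, and intersections do not commute with such joins. Your preliminary reduction is actually sound (for a decreasing sequence one does have $\bigcap_n\pi^{-1}\eta_n=\pi^{-1}\bigcap_n\eta_n$, by a $\limsup$-of-representatives argument, although you assert it without proof), but then what must be proved is tail triviality of the shape chain under $\mu_p$, i.e.\ ergodicity of the central measure $\mu_p$; that chain is \emph{transient} ($\la_1^{(n)}-\la_2^{(n)}\to\infty$ a.s.), so no recurrence-based zero--one law applies. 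The tool the paper uses, and which is absent from your proposal, is the Hewitt--Savage zero--one law through the de Finetti partitions $\zeta_n$: Lemma~\ref{l:definetti} shows that if $y$ is a finite permutation of $x$, then a.s.\ $\shape([x]_N)=\shape([y]_N)$ for all large $N$, hence every tail-measurable function is exchangeable and therefore constant. (Alternatively your reduction could be closed by citing that $\mu_p$ is an ergodic central measure, i.e.\ Thoma/Vershik--Kerov theory, but that is precisely the classical input the paper's self-contained argument avoids --- and it is not what your proof does.)

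The second genuine gap is the induction on $k$, and it is the one you flag yourself without resolving: the subword of letters bumped out of the first row is \emph{not} a Bernoulli sequence, so the inductive hypothesis --- a statement about a.e.\ pairs with respect to a product measure --- cannot be applied to it; your assertion that ``the relevant conditional and induced processes remain of product type'' is exactly what fails. The paper's induction is organized differently so as to preserve Bernoullicity: using Lemma~\ref{l:disappear} (every letter $\ell\ge2$ disappears from the first row infinitely often, proved by coupling with a recurrent random walk, using $p_{\ell-1}\ge p_\ell$), one first pins down the occurrences of the \emph{largest} letter $k$, and then deletes them; the remaining subsequence over $\{1,\dots,k-1\}$ is again i.i.d., so the inductive hypothesis (Lemma~\ref{l:tails}) applies to it. This ``discard the largest letter'' device, rather than your first-row/bumped-word split, is what makes the induction close; as written, your step from $k-1$ to $k$ is a plan with an acknowledged hole, not a proof.
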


As a corollary, we obtain the result proved (for an arbitrary central measure) in~\cite{Sniady}.

\begin{corollary}\label{cor:iso}
The youngization map~{\rm\eqref{hom}} is an isomorphism of measure spaces between $(X,m_p^\infty)$ and $(\T,\mu_p)$.
\end{corollary}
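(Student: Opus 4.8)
The plan is to deduce the corollary almost entirely from the totality of the cylinder partitions $\bxi_n$, using the general principle that a measure-preserving map between Lebesgue spaces is an isomorphism $\bmod\,0$ as soon as it is injective $\bmod\,0$ and its pushforward is the target measure. Recall from~\eqref{hom} (and~\cite{VK}) that $\pi$ is a homomorphism, that is, $\pi_* m_p^\infty=\mu_p$; in particular the image of $\pi$ is $\mu_p$-conull and $\pi$ is measure-preserving. It therefore remains only to establish that $\pi$ is essentially one-to-one.

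First I would identify the partition of $X$ into the fibers of $\pi$ with the join $\bigvee_n\bxi_n$. Indeed, a point of $\T$ is an infinite standard tableau, and two such tableaux are distinct precisely when their initial segments of some finite length $n$ differ; hence the cylinder partitions $\xi_n$ on $\T$ separate points, $\bigvee_n\xi_n=\eps$, and, equivalently, the sets $\xi_n$ generate the Borel $\sigma$-algebra $\mathcal B_\T$. Applying $\pi^{-1}$, which commutes with the generation of $\sigma$-algebras, gives $\bigvee_n\bxi_n=\bigvee_n\pi^{-1}\xi_n=\pi^{-1}\bigl(\bigvee_n\xi_n\bigr)=\pi^{-1}\eps$, and the right-hand side is exactly the partition of $X$ into the fibers $\{x:\pi(x)=t\}$.

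By the first assertion of Theorem~\ref{th:main}, the sequence $\bxi_n$ is total, i.e., $\bigvee_n\bxi_n=\eps$. Combined with the previous step, this shows that the fibers of $\pi$ are single points $\bmod\,0$, so $\pi$ is injective $\bmod\,0$; equivalently, the pullback $\sigma$-algebra $\pi^{-1}\mathcal B_\T=\sigma\bigl(\bigcup_n\bxi_n\bigr)$ exhausts $\mathcal B_X$ $\bmod\,0$. Since $\pi$ is measure-preserving with conull image and injective $\bmod\,0$, the theory of Lebesgue spaces yields a measurable inverse $\bmod\,0$, so $\pi$ is an isomorphism of measure spaces, as claimed. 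The dual second assertion of Theorem~\ref{th:main}, the ergodicity of $\bet_n$, is the complementary fact that the Young filtration $\{\bet_n\}$ converges to $\nu$; under the isomorphism it corresponds to the triviality of the tail partitions $\eta_n$ on $(\T,\mu_p)$, i.e., to $\mu_p$ being an ergodic central measure, and thus it certifies that the target of $\pi$ is genuinely the ergodic component rather than a nontrivial mixture.

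I do not expect a serious obstacle inside the corollary itself: all the analytic content is packed into Theorem~\ref{th:main}. The one point that must be handled with care is the interchange $\pi^{-1}\bigl(\bigvee_n\xi_n\bigr)=\bigvee_n\pi^{-1}\xi_n$ together with the claim that the $\xi_n$ generate $\mathcal B_\T$, since this is precisely what converts the combinatorial totality statement into the measure-theoretic statement that the pullback $\sigma$-algebra is everything; once this is in place, the standard Lebesgue-space criterion finishes the argument.
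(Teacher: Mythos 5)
Your proposal is correct and is essentially the paper's own (implicit) argument: the paper deduces the corollary from Theorem~\ref{th:main} exactly as you do, with totality of $\bxi_n$ identifying the fibers of $\pi$ with points $\bmod\,0$, and the standard Lebesgue-space fact that a measure-preserving, essentially injective map is an isomorphism $\bmod\,0$. You also correctly recognize that the ergodicity of $\bet_n$ is not needed for the isomorphism itself but is the dual statement (triviality of the tail partition on $(\T,\mu_p)$).
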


Note that the space $(\T,\mu_p)$ of infinite paths in the Young graph with the central measure~$\mu_p$  can be identified with the space of trajectories of a~Markov  random walk on the ``Weyl chamber''
$${\cal W}_k=\{(x_1,\ldots,x_k):x_i\in\mathbb Z,\, x_1\ge x_2\ge\ldots\ge x_k\ge0\},
$$ where for a path $\T\ni t=(\la^{(1)},\la^{(2)},\ldots)$ we set $\la^{(n)}=(\la_1^{(n)},\ldots,\la_k^{(n)})\in{\cal W}_k$ (thus, at each step, one of the coordinates is increased by~$1$). So, the youngization map~{\rm\eqref{hom}} establishes an isomorphism between the space~$(X,m_p^\infty)$ of trajectories of a {\it Bernoulli process} and the space of trajectories of a~{\it Markov process}.  For example, in the case of $k=2$ and the uniform measure $p=(\frac12,\frac12)$, the transition probabilities of the Markov process are given by the following formula (see~\cite{Markov}): if $j=\la_1-\la_2$ is the difference of the row lengths of a diagram, then
$$
\Prob(j,j+1)=\frac{j+2}{2(j+1)},\qquad \Prob(j,j-1)=\frac{j}{2(j+1)}.
$$

\smallskip
We also introduce another family of partitions~$\zeta_n$. Namely, on the space~$X$ of infinite sequences there is a natural action of the infinite symmetric group~$\sin$ by permutations of elements. Denote by~$\zeta_n$ the partition of~$X$ into the orbits of the finite subgroup~${\sn\subset\sin}$. In other words, two sequences $x,y\in X$ belong to the same element of~$\zeta_n$ if and only if $\{x\}_{n+1}=\{y\}_{n+1}$ and  in $[x]_n,[y]_n$ all  elements occur with the same multiplicity. The partitions~$\zeta_n$ will be called the {\it de Finetti partitions}.
Note that $\lim\limits_{n\to\infty}\zeta_n=\nu$ by the Hewitt--Savage zero--one law.

\section{Proofs of the main theorems}
\subsection{The case $k=2$}\label{sec:2}
In this section, we analyze the case of the two-letter alphabet $\A_2=\{1,2\}$. Note that the space $X_2=\A_2^\infty$  with Bernoulli measure~$m^\infty$, where $m=(p_1,p_2)$,  can be naturally regarded as the space of trajectories of the random walk on the one-dimensional lattice with probability~$p_1$ of moving right and probability~$p_2$  of moving left.
Recall that we assume that $p_1\ge p_2>0$.

Consider a sequence from $\A_2^n$  as a word $w=x_1\ldots x_n$. Bracket every factor~$21$ in~$w$. The remaining letters constitute a subword~$w_1$ in~$w$.
Bracket every factor~$21$ in~$w_1$. We are left with a word~$w_2$. Continue the procedure  until we are left with a word of the form $w_k=1^a2^b=x_{i_1}\ldots x_{i_{a+b}}$ with $a,b\ge0$. The elements $x_{i_1},\ldots ,x_{i_{a+b}}$ of the sequence~$w$ will be called {\it free}, and all the other elements will be called {\it paired}. The number of brackets will be called the
{\it rank} of~$w$ and denoted by~$r(w)$.

Note that it follows from the properties of the random walk on the one-dimensional lattice with $p_1\ge p_2$
that a.e.\ sequence $x\in X_2$ has an initial segment with more $1$'s than $2$'s. This means that $x$~contains infinitely many free $1$'s and each $2$ gets paired in a sufficiently long initial segment.

The  following lemma is an obvious consequence of the RSK construction.

\begin{lemma}
Let $\shape([x]_n)=(\la_1,\la_2)$. Then $\la_2=r([x]_n)$. Namely, the second row of~$Q([x]_n)$ contains the indices of the free $1$'s, while its first row contains the indices of all the other elements.
\end{lemma}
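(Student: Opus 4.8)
The plan is to track the RSK insertion of the prefix $[x]_n$ letter by letter and to read off both the shape and the recording tableau $Q_n(x)$ directly from the bracketing. Since the alphabet is $\{1,2\}$, every tableau produced has at most two rows, and one checks at once that at each stage the insertion tableau $P$ has first row of the form $1^p2^q$ and second row of the form $2^s$: the only letter that can sit strictly below a $1$ is a $2$, and nothing is ever bumped \emph{out} of the second row because no letter exceeds $2$. Hence every bump is a single, non-cascading step, in which an incoming letter may displace one $2$ from the first row down into the second, creating exactly one new box in row two.

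The heart of the argument is an invariant, proved by induction on the length of the inserted prefix: at every stage the number $q$ of $2$'s in the first row of $P$ equals the number of currently \emph{unmatched} $2$'s (open brackets) of the prefix, while $s$ equals the number of brackets already closed, i.e.\ the current value of the rank. The inductive step is a three-way case analysis. Inserting a $2$ appends it to the first row ($q\mapsto q+1$), opens a new bracket, and adds a box to the first row of $Q$. Inserting a $1$ when $q=0$ finds the first row equal to $1^p$, so the $1$ is simply appended with no bump: it is an unmatched, i.e.\ free, $1$, and again a first-row box of $Q$ is created. Inserting a $1$ when $q\ge 1$ bumps the leftmost $2$ into the second row ($q\mapsto q-1$, $s\mapsto s+1$): this $1$ closes an open bracket, i.e.\ it is a paired $1$, and the new box is created in the second row of $Q$. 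Thus the insertion reproduces exactly the dichotomy of the bracketing: a $1$ is paired precisely when, at the moment it is read, there is still an open $2$ for it to close.

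Carrying this to the end of $[x]_n$ yields $\lambda_2=s=r([x]_n)$, which is the first assertion; one may cross-check $\lambda_2=n-\lambda_1$ against Greene's description of $\lambda_1$ as the longest weakly increasing subsequence. Moreover the case analysis shows that a second-row box of $Q_n(x)$ is created exactly at the insertion of a $1$ that closes a bracket. Hence the second row of $Q_n(x)$ is indexed precisely by the paired $1$'s, of which there are $r([x]_n)$ in agreement with $\lambda_2=r$, while the first row collects the indices of all the $2$'s together with the free $1$'s. The one point deserving a word of justification, and the only real (if minor) obstacle, is that the first-in--first-out matching produced by the insertion, in which an incoming $1$ always releases the oldest $2$ still present in the first row, defines the \emph{same} number of brackets as the iterated cancellation of adjacent factors~$21$ used to define $r(w)$; this is the standard fact that the matching of a two-letter (Dyck-type) word is independent of the order in which adjacent pairs are removed, so that the count $r(w)$, and with it $s=\lambda_2$, is well defined.
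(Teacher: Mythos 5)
The paper offers no proof of this lemma at all: it is introduced with the words ``an obvious consequence of the RSK construction.'' Your letter-by-letter induction, with the invariant that the number of $2$'s in the first row of $P$ equals the number of currently unmatched $2$'s while the length of the second row equals the number of brackets closed so far, is exactly the verification the paper leaves implicit, and it is correct; you also rightly isolate the one point needing care, namely that the first-in--first-out matching performed by the insertion yields the same pairing (hence the same rank) as the iterated cancellation of adjacent factors $21$, which is the standard confluence property of that cancellation.

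One substantive remark: your conclusion assigns the second row of $Q([x]_n)$ to the indices of the \emph{paired} $1$'s (those that close a bracket), with the free $1$'s and all the $2$'s in the first row, whereas the lemma as printed says ``free $1$'s.'' You are right and the printed wording is a slip. Indeed, for $w=21$ both letters are paired and there are no free letters, yet the second row of $Q(w)$ is $\{2\}$; for $w=2121$ there are no free $1$'s at all while $\lambda_2=2$. The printed wording is also internally inconsistent with the lemma's own first sentence: the second row of $Q$ has $\lambda_2=r([x]_n)$ cells, and it is the paired $1$'s, one per bracket, that number exactly $r([x]_n)$, while the number of free $1$'s bears no relation to the rank. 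So your proof establishes the corrected (and clearly intended) statement, which is the one actually used later in the paper.
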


Now we can obtain an explicit description of the partitions $\bxi_n$~and~$\bet_n$, which, in particular, implies Theorem~{\rm\ref{th:part}} in the two-letter case.

\begin{proposition}\label{prop:young2}
The Young partitions on the set $X_2=\A_2^\infty$  can be described as follows:
\begin{itemize}
\item $x\sim_{\bxi_n}y$ $\iff$   all paired coordinates in $[x]_n,[y]_n$ coincide;
\item $x\sim_{\bet_n}y$ $\iff$ $[x]_n,[y]_n$ have the same rank  and  $1$'s~and~$2$'s occur in them with the same multiplicity
 \textup(these two conditions amount to the condition that $P_n(x)=P_n(y)$\textup) and ${\{x\}_{n+1}=\{y\}_{n+1}}$.
\end{itemize}
\end{proposition}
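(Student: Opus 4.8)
The plan is to prove each bullet as an \emph{iff}, leaning on the preceding Lemma, which describes $P_n(x)$ and $Q_n(x)$ explicitly from the bracketing of $[x]_n$. Its content is that $P_n(x)$ has first row $1^{n_1}2^{\,\la_1-n_1}$ and second row $2^{\la_2}$, so that $P_n(x)$ is determined by the multiplicities of $1,2$ together with the rank $r=\la_2$; and that the second row of $Q_n(x)$ records precisely the steps at which the second row of the shape grows, i.e.\ the positions of the paired $1$'s (at such a step an incoming $1$ bumps a hitherto unmatched $2$ into the second row). The parenthetical equivalence ``$P_n(x)=P_n(y)$ $\iff$ same rank and same multiplicities'' is then immediate, and it remains to identify the two partitions.

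For $\bxi_n$ I use $x\sim_{\bxi_n}y\iff Q_n(x)=Q_n(y)$. For the implication $\Leftarrow$, if all paired coordinates of $[x]_n,[y]_n$ coincide then the two words have the same $21$-matching, hence the second row grows at the same steps; since the number of matched pairs (and so $\la_2=r$) agrees, the shapes agree, and as a two-row standard tableau is determined by the set of entries of its second row we get $Q_n(x)=Q_n(y)$. For $\Rightarrow$, equality of $Q_n$ yields the same set $S$ of paired-$1$ positions and the same shape; it then remains to recover the paired $2$'s. Here I would track the insertion: the $2$ bumped down by the paired $1$ at a step $q\in S$ is the most recently inserted still-unmatched $2$, i.e.\ the top of the current stack of unmatched $2$'s, and one checks by induction on the step that, although $x$ and $y$ may carry different letters at the free positions, their stacks of unmatched $2$'s agree down to the depth ever reached by a pop; hence every matched pair $(o,q)$ is the same, so the paired coordinates coincide. (Equivalently, $\Leftarrow$ shows each ``paired-coordinate class'' lies in a coplactic class, and both sets have exactly $\la_1-\la_2+1$ elements, so they coincide.)

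For $\bet_n$ recall $x\sim_{\bet_n}y\iff\shape([x]_N)=\shape([y]_N)$ for all $N\ge n$. The implication $\Leftarrow$ is easy: if $P_n(x)=P_n(y)$ and $\{x\}_{n+1}=\{y\}_{n+1}$, then $P([x]_N)$ is obtained by inserting the common segment $x_{n+1}\ldots x_N$ into the common tableau $P_n$, so $P([x]_N)=P([y]_N)$ and in particular the shapes agree for every $N\ge n$. The substantial direction is $\Rightarrow$. Assuming the shapes agree for all $N\ge n$, set $\delta_N:=n_1([x]_N)-n_1([y]_N)$; since $\la_1,\la_2$ agree, $\delta_N$ equals $s^y_N-s^x_N$, the difference of the current stack sizes (numbers of unmatched $2$'s). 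Two dynamical facts drive the argument: (i) at a step $N$ where both stacks are nonempty, agreement of the second-row increment forces $x_N=y_N$ (an incoming $1$ grows the second row for both, an incoming $2$ for neither), so $\delta$ is unchanged; (ii) $\delta$ can change only at steps where some stack is empty, and if $\delta_{N-1}>0$ then $\delta_N\ge\delta_{N-1}$.

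The main obstacle, and the only place the Bernoulli hypothesis enters, is to rule out $\delta\not\equiv0$. By (ii), once $\delta>0$ it stays positive, whence $s^y_N\ge\delta_N>0$ forever, so $y$ has only finitely many free $1$'s; but almost every sequence has infinitely many free $1$'s (noted above, the stack being a reflected walk with $p_1\ge p_2$ that returns to $0$ infinitely often), a contradiction, and symmetrically for $\delta<0$. Hence $\delta\equiv0$, which by (i) gives $x_N=y_N$ for all $N>n$, i.e.\ $\{x\}_{n+1}=\{y\}_{n+1}$, while $\delta_n=0$ together with the equal shape at level $n$ gives equal multiplicities and equal rank, i.e.\ $P_n(x)=P_n(y)$. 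This establishes the description of $\bet_n$, and hence Theorem~\ref{th:part} in the two-letter case. I expect this last step to be the crux: the other claims are a direct reading of the Lemma, whereas here one must decode the tail from the mere sequence of shapes, which is possible only almost everywhere.
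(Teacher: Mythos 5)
Your proof is correct, and for the crux --- the forward implication in the description of $\bet_n$ --- it runs on the same engine as the paper's: an imbalance of unmatched $2$'s between two shape-equivalent sequences can never be repaired, and this contradicts the almost-sure properties of the walk (infinitely many free $1$'s, every $2$ eventually paired); like the paper, you use the mod~$0$ interpretation of the partition, so that these properties may be assumed for both $x$ and $y$. The differences are organizational, and mostly in your favor. The paper argues in three sequential steps: tails coincide (looking at the first differing coordinate $m$, where $y_m=2$ forces $x_m$ to be a free $1$, and then asserting that the tail of $y$ contains no free $1$'s), then ranks (obvious), then multiplicities (an extra free $2$ in $[y]_n$ would eventually get paired by the common tail, breaking shape equality); each step invokes an almost-sure fact separately. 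You instead run a single monotone invariant $\delta_N=n_1([x]_N)-n_1([y]_N)=s^y_N-s^x_N$, show via your facts (i)--(ii) that it can only move away from zero, and apply the almost-sure fact once, obtaining the tail equality and $P_n(x)=P_n(y)$ simultaneously; in particular your fact (ii) makes explicit the persistence argument that the paper compresses into the unproved sentence ``then the tail $\{y\}_{m+1}$ contains no free $1$'s.'' You also supply proofs of the first bullet and of both easy converse directions, which the paper dismisses as obvious, and your reading of the preliminary Lemma is the correct one: the second row of $Q_n$ records the indices of the \emph{paired} $1$'s (the steps at which the second row grows), not the free $1$'s as the paper's wording literally says.
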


\begin{proof}
The first assertion is obvious. To prove the second one, we first show that $\bet_n\succ\tau_n$. Let $x\sim_{\bet_n}y$. We must prove that $x\sim_{\tau_n}y$, i.e., $x_N=y_N$ for $N\ge n+1$. Assume the contrary  and let $m$~be the index of the first coordinate that differs in $x$~and~$y$. Without loss of generality,
$x_m=1$, $y_m=2$. But $y_m=2$ is a free element in~$[y]_m$, hence $x_m=1$~is a free element in~$[x]_m$ (otherwise,  $\shape([x]_m)\ne\shape([y]_m)$). Then the tail~$\{y\}_{m+1}$ contains no free $1$'s, which, as we have noted above, has probability~$0$. So, $\bet_n\succ\tau_n$. The coincidence of ranks is obvious. It remains to show that in $[x]_n$~and~$[y]_n$ the elements $1$~and~$2$ occur with the same multiplicitiy. Assume to the contrary that, say,  $[y]_n$~has more free $2$'s than~$[x]_n$. Since, almost surely, each $2$ becomes paired in a sufficiently long initial segment, at the moment when the ``extra'' $2$ gets paired, the condition $\shape([x]_N)=\shape([y]_N)$ fails.
\end{proof}

\begin{corollary}\label{cor:tails}
The three (Bernoulli, Young, and de Finetti) tail filtrations on $X_2$ satisfy the relation
$$
\tau_n\prec\zeta_n\prec\bet_n.
$$
\end{corollary}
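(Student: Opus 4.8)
The plan is to read off all three partitions as explicit equivalence relations on $X_2$ and then verify the two refinements by directly comparing their defining conditions, recalling that $\alpha\prec\beta$ means that $\beta$ refines $\alpha$ (every element of $\beta$ lies in an element of $\alpha$), exactly as in the relation $\bxi_n\prec\si_n$ noted earlier. By definition, $x\sim_{\tau_n}y$ iff $\{x\}_{n+1}=\{y\}_{n+1}$. By the definition of the de Finetti partition, $x\sim_{\zeta_n}y$ iff $\{x\}_{n+1}=\{y\}_{n+1}$ \emph{and} the letters $1,2$ occur with equal multiplicities in $[x]_n$ and $[y]_n$. Finally, by Proposition~\ref{prop:young2}, $x\sim_{\bet_n}y$ iff, in addition to these, $[x]_n$ and $[y]_n$ have the same rank $r([x]_n)=r([y]_n)$ (equivalently $P_n(x)=P_n(y)$). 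Thus each successive relation is obtained from the previous one by imposing one further constraint.

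First I would establish $\tau_n\prec\zeta_n$: if $x\sim_{\zeta_n}y$, then in particular $\{x\}_{n+1}=\{y\}_{n+1}$, so $x\sim_{\tau_n}y$; hence every $\zeta_n$-block is contained in a $\tau_n$-block, which is precisely $\tau_n\prec\zeta_n$. Next I would establish $\zeta_n\prec\bet_n$ in the same manner: by Proposition~\ref{prop:young2}, $x\sim_{\bet_n}y$ forces equal tails and equal multiplicities of $1$'s and $2$'s (with the extra equal-rank condition), so in particular $x\sim_{\zeta_n}y$; hence every $\bet_n$-block lies in a $\zeta_n$-block, giving $\zeta_n\prec\bet_n$. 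Chaining the two inclusions yields $\tau_n\prec\zeta_n\prec\bet_n$.

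There is essentially no analytic obstacle here: the content reduces entirely to the combinatorial chain of implications ``$\bet_n$-equivalence $\Rightarrow$ $\zeta_n$-equivalence $\Rightarrow$ $\tau_n$-equivalence,'' all of which are read off from Proposition~\ref{prop:young2} and the definitions. The only points needing care are the orientation of the partial order $\prec$ and the fact that the characterization in Proposition~\ref{prop:young2} holds only $\bmod\,0$ (it rests on the almost-sure statements about free $1$'s and eventually paired $2$'s), so the inclusions are to be understood as inclusions of measurable partitions $\bmod\,0$. It is worth remarking that both refinements are strict: within a fixed $(n+1)$-tail the initial segment $[x]_n$ may carry different multiplicities, so $\tau_n\neq\zeta_n$; and once the multiplicities are fixed the rank $\la_2=r([x]_n)$ still varies (it is not determined by the number of $1$'s and $2$'s), so $\zeta_n\neq\bet_n$. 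The corollary asserts only the refinements, which follow immediately from the one-directional implications above.
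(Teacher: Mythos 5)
Your proof is correct and is essentially the paper's (implicit) argument: the corollary is stated right after Proposition~\ref{prop:young2} precisely because the three equivalence relations are nested by definition, each obtained by adding one constraint, and the refinements follow by chaining the one-directional implications exactly as you do. Your remarks on the orientation of $\prec$ (consistent with $\bxi_n\prec\si_n$) and on the $\bmod\,0$ nature of the characterization are accurate and appropriate.
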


\begin{proposition}\label{prop:k2}
For the two-letter alphabet, Theorem~{\rm\ref{th:main}} holds, i.e., the sequence of Young cylinder partitions is total and the Young filtration is ergodic.
\end{proposition}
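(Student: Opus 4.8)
The plan is to prove the two assertions separately, using the explicit descriptions of Proposition~\ref{prop:young2} together with the random-walk fact that, almost surely, every~$2$ becomes paired in a sufficiently long initial segment. For the totality of~$\{\bxi_n\}$ I would argue pointwise. Fix a generic pair $x\neq y$ and let $m$ be the first index at which they differ, say $x_m=1$ and $y_m=2$. Since $[x]_{m-1}=[y]_{m-1}$, the stack of unmatched $2$'s just before position~$m$ is the same for both words; call its height~$u$. If $u\ge1$, then the $1$ at position~$m$ of~$x$ immediately closes a bracket, so position~$m$ is a paired coordinate of $[x]_n$ carrying the value~$1$ for every $n\ge m$ (a pair matched in a prefix stays matched in every extension). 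In~$y$ the coordinate~$m$ carries the value~$2$, so whether it is free or paired, the sets of paired coordinates of $[x]_n$ and $[y]_n$ differ, and by Proposition~\ref{prop:young2} we get $x\not\sim_{\bxi_n}y$ for all $n\ge m$. If $u=0$, the $1$ at position~$m$ of~$x$ opens no bracket and stays free forever, whereas the $2$ at position~$m$ of~$y$ is pushed onto the empty stack and, by the random-walk fact, is almost surely paired for all large~$n$; again position~$m$ is paired in $[y]_n$ but not in $[x]_n$. In every case almost every pair is eventually separated, which is exactly the totality of $\{\bxi_n\}$.

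For the ergodicity of $\{\bet_n\}$ I would start from the reformulation $\bet_n=\sigma(\shape([x]_N):N\ge n)$ coming from the definition, so that the limit partition~$\bet$ corresponds to the tail $\sigma$-algebra of the shape process $\la^{(n)}=\shape([x]_n)$, i.e.\ of the youngization image. Equivalently, by Proposition~\ref{prop:young2} the conditional measure of $m_p^\infty$ on a $\bet_n$-class is the uniform measure on the plactic class $\P_{P_n(x)}$ of the fixed insertion tableau (all words in a plactic class share the same content, hence the same Bernoulli weight), with the tail held fixed. Ergodicity therefore amounts to the statement that, for a.e.\ $x$ and every cylinder event depending on the first $j$ letters, the distribution of the first $j$ letters of a uniformly random word in $\P_{P_n(x)}$ converges, as $n\to\infty$, to the Bernoulli law $m_p^{\,j}$. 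I would prove this in the two-row lattice-path picture: a word in $\P_P$ is an arrangement of the fixed numbers $N_1$ of $1$'s and $N_2$ of $2$'s whose associated walk $T_k=\#\{1\text{'s}\}-\#\{2\text{'s}\}$ in the first $k$ letters has a prescribed running maximum $a=N_1-\la_2$ (the number of free $1$'s). Since $N_1\sim p_1n$ and $N_2\sim p_2n$ by the law of large numbers, and the running maximum is typically attained near the end of the path, conditioning on its exact value should leave the first $j$ steps asymptotically Bernoulli.

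The hard part is precisely this last estimate: showing that conditioning the arrangement on the exact value of the running maximum of~$T$ is asymptotically irrelevant for the initial segment. I would attempt it by a reflection/bijection argument on ballot sequences, or via hook-length asymptotics for the ratios $f^{\la/\nu}/f^{\la}$ of numbers of standard tableaux; equivalently, the task is to show that the tail $\sigma$-algebra of the shape process is trivial. For $k=2$ the shape process reduces to the nearest-neighbour birth--death chain $d_n=\la_1^{(n)}-\la_2^{(n)}$ on $\mathbb Z_{\ge0}$ displayed in the introduction, which escapes to $+\infty$ through a single end; verifying the triviality of its tail is the genuine content of the ergodicity statement. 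It is worth stressing that Corollary~\ref{cor:tails} only gives $\zeta_n\prec\bet_n$, so the Hewitt--Savage triviality of $\{\zeta_n\}$ does \emph{not} transfer to the finer filtration $\{\bet_n\}$, and this separate argument is unavoidable.
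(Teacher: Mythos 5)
Your proof of the first assertion (totality of $\{\bxi_n\}$) is correct and is essentially the paper's own argument: a first disagreement between $x$ and $y$ must occur at coordinates that are free in both prefixes (otherwise the paired data already differ), a free $1$ stays free under every extension, and a free $2$ almost surely gets paired later, so almost every pair is eventually separated. Your case distinction on the stack height $u$ only makes explicit what the paper compresses into one sentence.

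The second assertion is where the proposal breaks down: what you offer for ergodicity is a program, not a proof. Your reformulations are correct --- the conditional measure of $m_p^\infty$ on a $\bet_n$-class is indeed uniform on the plactic class $\P_{P_n(x)}$ with the tail held fixed, and ergodicity is indeed equivalent to the statement that the conditioned law of the first $j$ letters converges to $m_p^{\,j}$, equivalently to tail-triviality of the shape process on $(X,m_p^\infty)$ --- but the crucial estimate, that conditioning on the exact running maximum of the associated walk is asymptotically irrelevant for the initial segment, is exactly what you leave open: listing techniques you ``would attempt'' (reflection bijections, hook-length asymptotics) is not an argument. That estimate is the entire content of the proposition, and nothing in the proposal reduces its difficulty.

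Moreover, your closing claim --- that the Hewitt--Savage route ``does not transfer'' and that a separate hard argument is ``unavoidable'' --- is precisely the point where you miss the paper's key idea. It is true that the inclusion $\zeta_n\prec\bet_n$ by itself gives nothing about the finer filtration; but the paper does not use that inclusion. It proves the \emph{reversed} comparison in the limit: if $x\sim_{\zeta_n}y$, then almost surely there exists a finite $N$ with $x\sim_{\bet_N}y$, hence $x\sim_{\bet}y$, since the limit partition $\bet$ of the decreasing sequence is coarser than every $\bet_N$. This yields $\bet\prec\zeta_n$ mod~$0$ for every $n$, hence $\bet\prec\lim_{n\to\infty}\zeta_n=\nu$ by Hewitt--Savage, i.e., $\bet=\nu$. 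The required lemma is elementary in your own bracket language: if $x\sim_{\zeta_n}y$, the tails coincide and the prefixes have equal letter multiplicities, so they can differ only in rank, and the surplus of unmatched $2$'s in one prefix over the other equals the rank discrepancy; reading the common tail, each arrival of a $1$ at a moment when the lower stack is empty pops only the higher stack and decreases both discrepancies by one, and since almost surely every $2$ gets paired and free $1$'s keep occurring, after the finite time $N$ at which the last free $2$ of the two prefixes is paired the shapes agree and agree forever, which is exactly $x\sim_{\bet_N}y$. These few lines of combinatorics replace the entire unproved conditional-limit analysis in your proposal.
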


\begin{proof}
Let $x\sim_{\bxi}y$ and $x\ne y$. Then there exists~$n$ such that $[x]_{n-1}=[y]_{n-1}$ and $x_n\ne y_n$. But $x\sim_{\bxi_n}y$, hence all paired coordinates in $[x]_n$~and~$[y]_n$ coincide, so only free ones may differ. Without loss of generality, let $x_n$~be a~free~$1$ and $y_n$~be a free~$2$.  Almost surely, there exists $N>n$ such that this~$2$ gets paired in~$[y]_N$. But the element
$x_n=1$ remains free in~$[x]_N$. Then  $x\nsim_{\bxi_N}y$, a contradiction. This proves that $\bxi=\eps$.

Now we prove that $\bet=\nu$. Consider the de Finetti partitions~$\zeta_n$; we will prove that if $x\sim_{\zeta_n}y$, then there exists~$N$ such that $x\sim_{\bet_N}y$. Since  $\lim\limits_{n\to\infty}\zeta_n=\nu$ by the Hewitt--Savage zero--one law, this implies that ${\lim\limits_{n\to\infty}\bet_n=\nu}$, as required.

So, let $x\sim_{\zeta_n}y$ but $x\nsim_{\bet_n}y$, i.e., $\{x\}_{n+1}=\{y\}_{n+1}=:z$,  the multiplicities of~$1$'s~and~$2$'s in $[x]_n,[y]_n$ coincide, but $r([x]_n)\ne r([y]_n)$. Consider the common tail~$z$ of $x$~and~$y$. As free $1$'s appear in~$z$, they get paired with free $2$'s in $[x]_n$~and~$[y]_n$. Let $N$~be the moment when the last of the free $2$'s in~$[x]_n,[y]_n$ gets paired. It is easy to see that $\shape([x]_N)=\shape([y]_N)$ and, consequently, $x\sim_{\bet_N}y$. As discussed above, this completes the proof.
\end{proof}

\subsection{The general case}\label{sec:gen}

In this section, we prove Theorems~\ref{th:part}~and~\ref{th:main} in full generality.
Recall that ${p_1\ge p_2\ge\ldots\ge p_k>0}$. We need the following lemma, which shows that, almost surely, each element
$a>1$  eventually gets bumped from the first row of the $P$-tableau.

\begin{lemma}\label{l:disappear}
Fix $\ell=2,\ldots,k$ and denote by $m_n=m_n(\ell,x)$ the number of elements equal to~$\ell$ in the first row of the tableau~$P_n(x)$ for a random sequence $x\in X$. Then for every $q\in{\mathbb N}$, almost surely, there exists $N\ge q$ such that $m_N=0$.
\end{lemma}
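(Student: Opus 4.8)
The plan is to compare $m_n$ with a reflected random walk on $\mathbb{Z}_{\ge 0}$ that is driven only by the two letters $\ell-1$ and $\ell$, and then to invoke recurrence of that walk. The starting point is to track how a single RSK row-insertion changes $m_n$, the number of $\ell$'s in the first row of $P_n(x)$. Writing the first row in its sorted form $1^{a_1}2^{a_2}\cdots k^{a_k}$, so that $m_n=a_\ell$, I would check three cases. First, inserting the letter $\ell$ always raises $m_n$ by exactly $1$: the inserted $\ell$ is either appended or bumps the leftmost entry exceeding $\ell$, and in both cases one fresh $\ell$ ends up in the first row. Second --- and this is the crucial point --- inserting the letter $\ell-1$ lowers $m_n$ by $1$ whenever $m_n>0$ and leaves it at $0$ otherwise: since $\ell$ is the smallest value strictly greater than $\ell-1$, the entry that $\ell-1$ bumps is necessarily the leftmost $\ell$, provided one is present. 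Third, any other letter $c\notin\{\ell-1,\ell\}$ can never increase $m_n$: a letter $c>\ell$ acts entirely inside the part of the row exceeding $\ell$ and leaves $m_n$ unchanged, while a letter $c<\ell-1$ adds no $\ell$ and at most removes one.

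With these transition rules in hand, I would introduce the reflected random walk $W_n$ on $\{0,1,2,\ldots\}$ that ignores every letter except $\ell-1$ and $\ell$, takes a step $+1$ on each occurrence of $\ell$, and takes a reflected step $W\mapsto\max(W-1,0)$ on each occurrence of $\ell-1$. A straightforward induction on $n$ then yields the domination $m_n\le W_n$ (both starting at $0$): on a step inserting $\ell$ both quantities increase by $1$; on a step inserting $\ell-1$ both apply $\max(\,\cdot\,-1,0)$, which preserves the inequality; and on every other step $W_n$ is unchanged while $m_n$ does not increase.

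Finally I would analyze $W_n$ itself. Restricted to the almost surely infinite subsequence of times at which $x_n\in\{\ell-1,\ell\}$, the walk $W$ is an i.i.d.\ reflected $\pm1$ walk whose down-step probability equals $p_{\ell-1}/(p_{\ell-1}+p_\ell)$ and whose up-step probability equals $p_\ell/(p_{\ell-1}+p_\ell)$. Because the hypothesis $p_{\ell-1}\ge p_\ell$ makes the down-step probability at least $\tfrac12$, this walk has nonpositive drift and is therefore recurrent, so $W_n=0$ for infinitely many $n$ almost surely. Combined with $0\le m_n\le W_n$, this forces $m_n=0$ infinitely often, which is precisely the assertion: for any given $q$ one selects such an $N\ge q$.

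The main obstacle is conceptual rather than computational. The quantity $m_n$ is not itself a Markov process with clean $\pm1$ increments, because a small letter $c<\ell-1$ removes an $\ell$ only when no intermediate values $c+1,\ldots,\ell-1$ occur in the first row, so the decrements of $m_n$ genuinely depend on the whole row configuration. The trick that makes the argument go through is to discard all of these configuration-dependent decrements and retain only the guaranteed ones coming from the letter $\ell-1$; this produces a bona fide reflected walk and, just as importantly, invokes the monotonicity $p_{\ell-1}\ge p_\ell$ at exactly the spot where recurrence is needed. The only points requiring care are the boundary behaviour at $m_n=0$ and the degenerate case $\ell=2$ (where $\ell-1=1$), but both are covered by the same uniform argument.
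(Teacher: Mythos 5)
Your proof is correct, and while it shares the paper's core strategy---dominate $m_n$ by a recurrent random walk, using $p_{\ell-1}\ge p_\ell$ exactly where recurrence is needed---the implementation is genuinely different and in some respects cleaner. The paper records the exact transition rule for $m_n$ (it decreases whenever $a_n\le x_{n+1}<\ell$, an event of probability $r_n=p_{a_n}+\cdots+p_{\ell-1}\ge p_{\ell-1}$ depending on the current row), splits into the cases $p_{\ell-1}>p_\ell$ (declared obvious) and $p_{\ell-1}=p_\ell=p$, proves recurrence of a lazy symmetric walk on $\mathbb Z$ via the Fourier criterion from Spitzer, and then needs an auxiliary randomization $\eps_n$ to thin the varying decrease probability $r_n$ down to the constant $p$, producing a dominating process $z'_n$ equal in law to that walk. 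You instead discard all configuration-dependent decrements except the guaranteed ones caused by the letter $\ell-1$ itself; this makes your dominating process $W_n$ a \emph{deterministic} function of the same sequence $x$ (a reflected walk driven only by the occurrences of $\ell-1$ and $\ell$), so the domination $m_n\le W_n$ is a pathwise induction requiring no extra randomness, both cases $p_{\ell-1}>p_\ell$ and $p_{\ell-1}=p_\ell$ are handled uniformly, and recurrence reduces to the classical recurrence of a reflected $\pm1$ walk with down-step probability at least $\tfrac12$ (via the embedded i.i.d.\ walk at the times when $x_n\in\{\ell-1,\ell\}$), with no need for the characteristic-function criterion. The one place where you are terse---``nonpositive drift and is therefore recurrent''---deserves a sentence (for down-probability exactly $\tfrac12$ the reflected walk before hitting $0$ is a simple symmetric walk, whose recurrence on $\mathbb Z$ gives hitting of $0$ a.s., and the strong Markov property then yields infinitely many visits), but this is at the same level of rigor as the paper's appeal to Spitzer.
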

\begin{proof}
If $m_q=0$, there is nothing to prove. Let $m_q\ne0$. Denote by~$a_n$ the greatest element less than~$\ell$ in the first row of~$P_n(x)$ (or~$1$ if there is no such element).
Clearly,
$$
m_{n+1}=\begin{cases}
m_n+1&\text{if } x_{n+1}=\ell,\\
m_n-1&\text{if } a_n\le x_{n+1}<\ell,\\
m_n&\text{if } x_{n+1}>\ell \text{ or } x_{n+1}<a_n.
\end{cases}
$$
The first event has probability~$p_\ell$, while the second one has probability ${r_n:=p_{a_n}+\ldots+p_{\ell-1}\ge p_{\ell-1}}$.  If $p_{\ell-1}>p_\ell$, then the desired assertion is obvious. Otherwise, let $p_{\ell-1}=p_\ell=p$ and consider the random walk~$\{z_n\}$ on~$\mathbb Z$ with transition probabilities
$$
z_{n+1}=\begin{cases}
z_n+1&\text{with probability } p,\\
z_n-1&\text{with probability } p,\\
z_n&\text{with probability } 1-2p.
\end{cases}
$$
Now we use the well-known recurrence criterion for a random walk with step~$d$ (see, e.g.,~\cite{Spitzer}): it is recurrent if and only if $\lim\limits_{t\nearrow1}\int_{-\pi}^\pi\frac{dx}{1-t\phi(x)}=\infty$, where $\phi(x)={\mathbb E}e^{ixd}$. In our case, $\phi(x)=2p\cos x+1-2p$; it easily follows that the criterion is satisfied and the random walk is recurrent. Hence, by the properties of a recurrent random walk, the random walk~$\{z_n\}$ starting from~$m_q$ will reach~$0$ with probability~$1$. Now we apply coupling. Namely, consider the random process~ $\{z'_n\}_{n\ge q}$ on~$(X,m_p^\infty)$ defined as follows. Take a random variable~$\eps_n$ independent of all the other ones that is equal to~$1$ with probability~$\frac{p}{r_n}$ and~$0$ with probability~$1-\frac{p}{r_n}$. Set
$$
z'_{n+1}=\begin{cases}
z'_n+1&\text{if } x_{n+1}=\ell,\\
z'_n-1&\text{if } a_n\le x_{n+1}<\ell\text{ and } \eps_n=1,\\
z'_n&\text{otherwise}.
\end{cases}
$$
Clearly, on the one hand, $\{z'_n\}$ has the same distribution as~$\{z_n\}$ and, consequently, reaches~$0$ with probability~$1$. On the other hand, for every $n\ge q$ we have $m_n\le z_n'$. It follows that the original process~$\{m_n\}$ also reaches~$0$ with probability~$1$.
\end{proof}

The following lemma completes the proof of Theorem~\ref{th:part}.

\begin{lemma}\label{l:tails}
Two sequences $x,y\in X$  belong to the same element of the Young tail partition~$\bet_n$ if and only if their initial segments $[x]_n$~and~$[y]_n$ belong to the same plactic class and the tails
$\{x\}_{n+1}$~and~$\{y\}_{n+1}$ coincide.
\end{lemma}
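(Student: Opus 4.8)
The statement to be proved is exactly the description of the element of~$\bet_n$ through a point: that $x\sim_{\bet_n}y$ (i.e.\ $\shape([x]_N)=\shape([y]_N)$ for all $N\ge n$) holds if and only if $P([x]_n)=P([y]_n)$ and $\{x\}_{n+1}=\{y\}_{n+1}$. The ``if'' direction is exact (no null set is discarded), and I would dispatch it first via the plactic monoid. If $[x]_n$ and $[y]_n$ lie in the same plactic class~$\P_t$ they are Knuth equivalent, and since $\{x\}_{n+1}=\{y\}_{n+1}$ the words $[x]_N$ and $[y]_N$ are obtained from $[x]_n$ and $[y]_n$ by right concatenation with one and the same word $(x_{n+1},\ldots,x_N)=(y_{n+1},\ldots,y_N)$. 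Because Knuth equivalence is a right congruence, $[x]_N$ and $[y]_N$ remain Knuth equivalent, so in fact $P([x]_N)=P([y]_N)$, and a fortiori $\shape([x]_N)=\shape([y]_N)$, for every $N\ge n$; hence $x\sim_{\bet_n}y$.

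For the ``only if'' direction I would show that, for $m_p^\infty$-almost every~$x$, the full shape trajectory $(\shape([x]_N))_{N\ge n}$ determines both the tail $\{x\}_{n+1}$ and the plactic class of $[x]_n$. I would prove this by induction on the alphabet size~$k$, the base case $k=2$ being precisely the second assertion of Proposition~\ref{prop:young2}. The two things to extract from the shape trajectory are (i)~tail coincidence $\{x\}_{n+1}=\{y\}_{n+1}$ and (ii)~equality of prefix tableaux $P([x]_n)=P([y]_n)$; each can fail only on a null set, and the engine driving both is Lemma~\ref{l:disappear}.

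The inductive step would exploit the layered structure of row insertion: the first row of~$P_N$ is a weakly increasing word, and the stream of letters bumped out of the first row is itself fed, by RSK, into the tableau formed by rows $2,3,\ldots$, which is governed by a problem with one fewer effective letter. Lemma~\ref{l:disappear}, applied to each $\ell=2,\ldots,k$, guarantees that almost surely every letter vacates the first row infinitely often; this is the exact generalization of the two-letter fact that ``each~$2$ eventually gets paired'' used in Proposition~\ref{prop:k2}. The recurrence it supplies has two consequences. First, a discrepancy located below the first row can never stay hidden: when the covering first-row entries are bumped down (which happens a.s.), the discrepancy surfaces as a difference of shapes, so the rows $\ge 2$ are pinned down by the induction hypothesis applied to the bumped-letter subprocess. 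Second, the record of which boxes are appended to the first row, together with the now-known lower rows, reconstructs the first-row content and the inserted tail letters, forcing both (i) and (ii). The null-event reasoning here mirrors the proof that $\bet_n\succ\tau_n$ in Proposition~\ref{prop:young2}, with Lemma~\ref{l:disappear} playing the role that the one-dimensional-walk pairing played for $k=2$.

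The main obstacle I anticipate is making the ``peeling'' reduction rigorous at the level of measures: one must identify the bumped-letter stream as a genuine instance of the size-$(k-1)$ problem and transport the full-measure set on which the inductive conclusion holds back to $(X,m_p^\infty)$, all while keeping track of the finitely many tableaux in a given plactic class. The probabilistic heart, however, is entirely contained in Lemma~\ref{l:disappear}: once recurrence of the ``row-$1$ vacated'' events is in hand, the combinatorial bookkeeping that converts a latent tableau-or-tail discrepancy into an observable shape discrepancy is of the same elementary character as in the two-letter case.
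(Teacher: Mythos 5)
Your skeleton --- induction on the alphabet size with Proposition~\ref{prop:young2} as the base case, peeling off the first row, applying the induction hypothesis to the stream of letters bumped into the lower rows, and using Lemma~\ref{l:disappear} as the probabilistic engine --- is exactly the paper's, and your treatment of the ``if'' direction via the plactic (right) congruence is correct (the paper leaves that direction implicit). But the second half of your inductive step has a genuine gap. You claim that ``the record of which boxes are appended to the first row, together with the now-known lower rows, reconstructs the first-row content and the inserted tail letters.'' This is false as a deterministic statement, and you offer no mechanism that would make it true almost surely. Concretely, suppose at some time $N>n$ the first rows of $P_N(x)$ and $P_N(y)$ are $(1,1,3)$ and $(1,2,3)$, with identical lower rows and identical shapes, and the next letters are $x_{N+1}=1$, $y_{N+1}=2$. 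Both insertions bump the letter~$3$, so the shapes, the lower rows, and the bumped streams all remain identical, while the first rows stay different ($(1,1,1)$ versus $(1,2,2)$) and the inserted letters differ. Thus the data you propose to read off does not determine the first row or the tail; proving that such a hidden discrepancy must almost surely surface in the shape trajectory is precisely the content of the lemma, so at this point your argument is circular.

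The paper closes exactly this gap with two steps that your proposal lacks. First, it proves $m_n(k,x)=m_n(k,y)$ for the multiplicity of the maximal letter~$k$ in the first row: because the bumped streams coincide, the difference $m_i(k,x)-m_i(k,y)$ cannot decrease (a decrease would require $k$ to be bumped from the first row of $P([x]_i)$ while a smaller letter is bumped from the first row of $P([y]_i)$), whereas Lemma~\ref{l:disappear} forces $m_j(k,x)=0$ for some $j>n$; hence the difference is zero, and moreover the letters equal to~$k$ occupy the same positions in the two tails. Second --- and this is the step a pure row-peeling induction cannot supply --- the paper deletes all occurrences of~$k$ from $x$ and~$y$ and applies the induction hypothesis a \emph{second} time, now to words in the alphabet $\{1,\ldots,k-1\}$; this letter-peeling application is what finally pins down the sub-$k$ part of the first row and of the tails. (Your concern about making the bumped-stream reduction rigorous at the level of measures is reasonable --- the paper itself is brisk there --- but it is not the main missing ingredient; the missing ingredient is the combinatorial double use of the induction hypothesis, once on rows and once on letters.)
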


\begin{proof}
We argue by induction on the number~$k$ of letters in the alphabet~$\A$. The base case $k=2$ is proved in Proposition~\ref{prop:young2}. We now prove the induction step $k-1\mapsto k$. Consider the subtableaux $P'([x]_i)$~and~$P'([y]_i)$  in $P([x]_i)$~and~$P([y]_i)$, respectively, consisting of all rows except the first one (and filled with $2,\ldots,k$). Then $\shape(P'([x]_i))=\shape(P'([y]_i))$ for $i\ge n$, hence, by the induction hypothesis,  $P'([x]_n)=P'([y]_n)$ and the sequences of elements bumped into the second row in $\{x\}_{n+1}$~and~$\{y\}_{n+1}$ coincide. We claim that
$m_n(k,x)= m_n(k,y)$ in the notation of Lemma~\ref{l:disappear}. Assume to the contrary that, say, $m_n(k,x)> m_n(k,y)$. Since the shapes of the growing tableaux coincide, it is clear that the difference
$m_i(k,x)-m_i(k,y)$ can decrease only if $k$~gets bumped from the first row of~$P([x]_i)$ and a smaller element gets bumped from the first row of~$P([y]_i)$, which, as noted above, cannot happen. However, it follows from Lemma~\ref{l:disappear} that there exists $j>n$ such that $m_j(k,x)=0$, a contradiction. Hence, $m_n(k,x)=m_n(k,y)$, and it follows from the above considerations that elements equal to~$k$ occupy the same positions  in $\{x\}_{n+1}$~and~$\{y\}_{n+1}$. Now note that these elements do not affect the growth of the subtableaux filled with the smaller elements. Denote by  $x'$~and~$y'$ the subsequences in $x$~and~$y$, respectively, obtained by discarding the elements equal to~$k$. It follows from what we have proved that
$x'\sim_{\bet_{n'}}y'$, where $n'$~is the number of elements less than~$k$ in $[x]_n$~and~$[y]_n$. It remains to apply the induction hypothesis to $x'$~and~$y'$.
\end{proof}

\begin{corollary}
$$
\bxi_n\prec\si_n,\qquad \bet_n\succ\zeta_n\succ \tau_n.
$$
\end{corollary}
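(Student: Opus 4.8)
The plan is to read the statement off directly from the explicit descriptions of the partitions, since the corollary is just the general-$k$ analogue of Corollary~\ref{cor:tails} and, with Lemma~\ref{l:tails} in hand, all three comparisons reduce to definition-chasing. I would handle the two displayed relations independently, and within the second I would verify the refinements $\bet_n\succ\zeta_n$ and $\zeta_n\succ\tau_n$ separately; throughout, recall that $\alpha\prec\beta$ means $x\sim_\beta y\Rightarrow x\sim_\alpha y$ (i.e.\ $\beta$ is the finer partition).

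For $\bxi_n\prec\si_n$ nothing new is needed: if $x\sim_{\si_n}y$, then $[x]_n=[y]_n$, whence trivially $Q([x]_n)=Q([y]_n)$, that is, $x\sim_{\bxi_n}y$. This is exactly the remark already made right after the definition of the Young partitions. For the chain $\bet_n\succ\zeta_n\succ\tau_n$, I would first treat $\bet_n\succ\zeta_n$. Suppose $x\sim_{\bet_n}y$. By Lemma~\ref{l:tails} this means that $[x]_n$ and $[y]_n$ lie in a common plactic class and $\{x\}_{n+1}=\{y\}_{n+1}$. Membership in a fixed plactic class is equality of the $P$-tableaux, $P([x]_n)=P([y]_n)$; since the multiset of entries of a semistandard tableau is precisely the content of the word, the letters $1,\dots,k$ occur with the same multiplicities in $[x]_n$ and $[y]_n$. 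Together with the coincidence of tails this is exactly the defining condition for $x\sim_{\zeta_n}y$. Finally, $\zeta_n\succ\tau_n$ is immediate from the definitions, since $x\sim_{\zeta_n}y$ already includes $\{x\}_{n+1}=\{y\}_{n+1}$, which is the defining relation of $\tau_n$.

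I expect no genuine obstacle here: the only substantive input is Lemma~\ref{l:tails} (which supplies the description of $\bet_n$), and beyond that the argument uses nothing more than the fact that the plactic class of a word records its content. If anything merits care, it is keeping the direction of the refinement order $\prec$ straight, i.e.\ checking in each comparison which of the two equivalences implies the other rather than conversely.
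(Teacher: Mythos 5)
Your proposal is correct and matches the paper's (implicit) reasoning: the corollary is stated there without proof precisely because, once Lemma~\ref{l:tails} identifies the elements of $\bet_n$ as (plactic class of $[x]_n$, fixed tail), all three refinements follow by the same definition-chasing you carry out, using that a plactic class determines the content of the word. Your verification of the directions of $\prec$ and $\succ$ is also consistent with the paper's usage (e.g.\ $\bxi_n\prec\si_n$ with $\si_n$ the finer partition).
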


Now we turn to the proof of Theorem~{\rm\ref{th:main}.

1. If $x\sim_{\bxi}y$, then $\shape([x]_n)=\shape([y]_n)$ for all~$n$, and it follows from Lemma~\ref{l:tails} with $n=0$ that $x=y$.

2. As in Proposition~\ref{prop:k2}, we want to use the de Finetti partitions and the Hewitt--Savage law. Namely, the desired result follows by the Hewitt--Savage law from the following lemma.

\begin{lemma}\label{l:definetti}
If $x\sim_{\zeta_n}y$, then there exists $N\ge n$ such that $x\sim_{\bet_N}y$.
\end{lemma}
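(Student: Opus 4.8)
The plan is to prove the lemma by induction on the size $k$ of the alphabet, in parallel with the proof of Lemma~\ref{l:tails}; the base case $k=2$ is precisely the argument already given in Proposition~\ref{prop:k2}. By Lemma~\ref{l:tails}, $x\sim_{\bet_N}y$ is equivalent to $P([x]_N)=P([y]_N)$ together with $\{x\}_{N+1}=\{y\}_{N+1}$. Since $x\sim_{\zeta_n}y$ already forces $\{x\}_{n+1}=\{y\}_{n+1}$, the tail condition holds automatically for every $N\ge n$, so the whole problem reduces to producing an $N\ge n$ with $P([x]_N)=P([y]_N)$.

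First I would strip off the largest letter. Let $x',y'$ be the subsequences obtained by deleting all symbols equal to~$k$; these are again Bernoulli sequences over the alphabet $\{1,\ldots,k-1\}$ (with the conditional probabilities), and since $[x]_n,[y]_n$ have the same content and the tails agree, one gets $x'\sim_{\zeta_{n'}}y'$, where $n'$ is the number of non-$k$ symbols in $[x]_n$. The induction hypothesis then yields an~$M$ with $P([x']_{M'})=P([y']_{M'})$ for all $M'\ge M$ (the extension to all later levels is automatic, since $\bet$-equivalence at level~$M$ is exactly the condition that all shapes from level~$M$ on agree). The combinatorial fact I would use to transfer this back is that deleting all $k$'s from $P(w)$ returns exactly $P(w')$: the maximal letter is never bumped, it only occupies the ends of rows, and its removal commutes with insertion. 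Hence for all large~$N$ the non-$k$ parts of $P([x]_N)$ and $P([y]_N)$ coincide, say as a common tableau~$U$ of shape~$\mu$; the $k$'s then fill the skew horizontal strip $\shape([x]_N)/\mu$, and therefore $P([x]_N)=P([y]_N)$ \emph{if and only if} $\shape([x]_N)=\shape([y]_N)$. It remains to synchronize the shapes.

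To synchronize the shapes I would run a coupling driven by the common tail, with Lemma~\ref{l:disappear} as the flushing mechanism. Because the non-$k$ part of the first row is now common to $x$ and $y$, the threshold~$a_n$ of Lemma~\ref{l:disappear} and the inserted letters coincide for both, so the counts $m_N(k,x)$ and $m_N(k,y)$ of $k$'s in the first row evolve by identical increments, i.e.\ as one reflected walk on $\mathbb{Z}_{\ge0}$ with a barrier at~$0$. Their difference is thus monotone, and since Lemma~\ref{l:disappear} forces each count to reach~$0$ infinitely often while the ``$-1$'' steps keep occurring with positive probability, the two counts coalesce and remain equal for all large~$N$. At that moment the first rows of $P([x]_N)$ and $P([y]_N)$ are identical, every later insertion bumps the same element into the second row, and the second-row subtableaux (filled with $\{2,\ldots,k\}$) receive a common input, have the same content, and constitute a smaller instance of the same problem with one fewer row; iterating at most~$k$ times synchronizes all rows.

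The hard part will be this last step: unlike $x'$, the sequence bumped into the second row is \emph{not} Bernoulli, so neither Lemma~\ref{l:disappear} nor the Hewitt--Savage law applies to it verbatim, and one must either establish a lower-row analogue of the flushing lemma for this dependent process or argue shape synchronization globally rather than row by row. Once an $N\ge n$ with $x\sim_{\bet_N}y$ is secured, Theorem~\ref{th:main} follows as indicated: since $\lim_n\zeta_n=\nu$ by Hewitt--Savage, the $\zeta_n$-classes are eventually absorbed into single $\bet_N$-classes, whence $\bet=\nu$.
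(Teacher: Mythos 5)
Your reduction steps are sound, and they take a genuinely different route from the paper: Lemma~\ref{l:tails} does reduce the problem to producing an $N$ with $P([x]_N)=P([y]_N)$; stripping the maximal letter, the restriction property of RSK, and the induction on the alphabet size correctly force the non-$k$ parts of the two $P$-tableaux to agree from some time on (your parenthetical justification misspeaks --- $k$'s \emph{are} bumped, which is the whole point of Lemma~\ref{l:disappear} --- but the restriction property itself is standard and true); and your monotone coupling of the two first-row $k$-counts (reflected walks with a common drive, so their difference is order-preserving and non-increasing, and vanishes as soon as the larger count hits~$0$, which Lemma~\ref{l:disappear} guarantees) correctly synchronizes the first rows. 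But the proof does not close, and you say so yourself. Equality of the tableaux requires the $k$'s to occupy the same cells in \emph{every} row, so after the first rows coalesce you must still synchronize the $k$-counts in rows $2,3,\ldots$, whose driving sequence is the sequence of elements bumped out of the first row. That sequence is not Bernoulli, so neither Lemma~\ref{l:disappear} nor the positive-probability recurrence argument applies to it as stated; the needed ``flushing'' statement (almost surely, the number of $k$'s in row~$2$ vanishes at arbitrarily late times) is exactly what is missing, and nothing in your induction on the alphabet supplies it, because this recursion runs over rows, not letters. As written, your argument proves only that the first rows eventually agree, which is strictly weaker than $x\sim_{\bet_N}y$.

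For comparison, the paper proceeds differently: since $\zeta_n$ is the orbit partition of~$\sn$, it reduces to the case where $x$ and $y$ differ by a single Coxeter transposition; it then tracks the first rows as multisets and shows they differ by exactly one element $v_j$, which never decreases, can increase only finitely often, and is eventually absorbed --- this uses Lemma~\ref{l:disappear} together with the fact that the merging insertion $u_j\le x_{j+1}<v_j$ has probability bounded below by $p_{v-1}>0$. After that the paper recurses over rows, asserting that the sequences bumped into the second row ``differ only by a permutation'' and invoking induction on the (finitely many) rows. Note that this is the very step you flag as the hard part --- the bumped sequences are not Bernoulli there either --- so you have correctly located the crux of the lemma; the difference is that the paper compresses that step into one sentence, whereas your write-up leaves it explicitly open. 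To complete your proof one would have to establish a lower-row analogue of Lemma~\ref{l:disappear} for the bumped (dependent) process, or replace the row-by-row recursion by a global shape-synchronization argument; until then, the proposal is an incomplete proof.
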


\begin{proof}
Since $\zeta_n$ is the orbit partition for an action of the symmetric group~$\sn$, it suffices to prove the assertion in the case where $x$~and~$y$ are obtained from each other by the action of a Coxeter generator
$\si_i=(i,i+1)$, i.e., by a transposition of $x_i$~and~$x_{i+1}$. Assume without loss of generality that
$x_i=u<v=x_{i+1}$. Then $y_i=v$, $y_{i+1}=u$, and $y_j=x_j$ for $j\ne i,i+1$.

Denote by $R^{(j)}(x)$ and $R^{(j)}(y)$ the first rows of the tableaux~$P_j(x)$ and~$P_j(y)$, respectively (as multisets). We claim that almost surely there exists~$N\ge i+1$ such that $R^{(N)}(x)=R^{(N)}(y)$.
If $R^{(i+1)}(x)= R^{(i+1)}(y)$, there is nothing to prove. Otherwise, $\max R^{(i)}(x)=u$.
Set $v_{i+1}:=v$.
Then $R^{(i+1)}(x)=R^{(i+1)}(y)\cup\{v_{i+1}\}$.

Assume that at the $j$th step
\begin{equation}\label{+1}
R^{(j)}(x)=R^{(j)}(y)\cup\{v_{j}\}.
\end{equation}
Set $A_j:=\{d\in R^{(j)}(x):d<v_j\}$ and ${B_j:=\{d\in R^{(j)}(x):d\ge v_j\}\setminus\{v_j\}}$ (multisets) and denote $u_j:=\max A_j$. In particular, $u_{i+1}=u$ and $B_{i+1}=\emptyset$. Look at the insertion of an element~$x_{j+1}$ with $j>i$. Clearly, if $x_{j+1}< u_j$ or $x_{j+1}\ge v_j$, then $R^{(j)}(x)$~and~$R^{(j)}(y)$
undergo the same changes; in this case, we set $v_{j+1}=v_j$, and~\eqref{+1} remains valid.

If  ${u_j
\le x_{j+1}<v_j}$, then
$R^{(j+1)}(x)=R^{(j)}(x)\setminus\{v_j\}\cup\{x_{j+1}\}$ and two cases are possible. If $B_j\ne\emptyset$, then ${R^{(j+1)}(y)=R^{(j)}(y)\setminus\{\min B_j\}\cup\{x_{j+1}\}}$, and~\eqref{+1} remains valid with $v_{j+1}=\min B_j$. Finally, if ${B_j=\emptyset}$, then $R^{(j+1)}(y)=R^{(j)}(y)\cup\{x_{j+1}\}$ and $R^{(j)}(x)=R^{(j)}(y)$.

We claim that this will eventually happen with probability~$1$. Assume the contrary. Note that
$v_j$~never decreases and can increase only finitely many times, because the alphabet~$\A$ is finite. Let $v_j=v$ for all sufficiently large~$j$. By Lemma~\ref{l:disappear}, almost surely there are infinitely many~$j$ such that  $m_j(v,y)=0$, i.e., $v\notin B_j$.  Hence, almost surely one of them is succeeded by the event  $u_j\le x_{j+1}<v$ (which has probability $\ge p_{v-1}>0$). If at this moment $B_j\ne\emptyset$, then $v_{j+1}=\min B_j>v$, a contradiction. Therefore,
$B_j=\emptyset$ and, as shown earlier,  $R^{(j)}(x)=R^{(j)}(y)$.

So, we have proved that if $x\sim_{\zeta_n}y$, then with probability~$1$ there exists~$N$ such that $P_N(x)$~and~$P_N(y)$ have the same first row. But then the sequences of elements bumped into the second row in these tableaux also differ only by a permutation, hence, we obtain by induction that all rows (there are finitely many of them) eventually become equal. The lemma is proved.
\end{proof}

As we have discussed earlier, the second assertion of Theorem~{\rm\ref{th:main}} follows from Lemma~\ref{l:definetti} by the Hewitt--Savage zero--one law. The theorem is proved.


\begin{thebibliography}{55}


\bibitem{Fulton} W.~Fulton, {\it Young Tableaux
With Applications to Representation Theory and Geometry}, Cambridge University Press, 1997.

\bibitem{Kerov}  S.~V.~Kerov, {\it Asymptotic Representation Theory of the Symmetric Group and its Applications in Analysis}, Amer. Math. Soc., Providence, RI, 2003.

\bibitem{VK}S.~V.~Kerov and A.~M.~Vershik, The characters of the infinite symmetric
group and probability properties of the Robinson--Schensted--Knuth algorithm,
{\it SIAM J. Algebraic Discrete Methods} {\bf7}, No.~1, 116–124 (1986).

\bibitem{RS} D.~Romik and P.~\'Sniady, Jeu de taquin dynamics on infinite Young tableaux
and second class particles, {\it Ann. Probab.} {\bf43}, No.~2, 682--737 (2015).

\bibitem{Sniady}P.~\'Sniady, Robinson--Schensted--Knuth algorithm, jeu de taquin, and Kerov--Vershik measures on infinite tableaux,
{\it SIAM J. Discrete Math.} {\bf 28}, No.~2, 598--630 (2014).

\bibitem{Spitzer}F.~Spitzer, {\it Principles of Random Walk}, Springer-Verlag, New York--Heidelberg, 1976.

\bibitem{V74}A.~M.~Vershik, Description of invariant measures for the actions of some infinite-dimensional groups,
{\it Sov. Math. Dokl.} {\bf15}, 1396--1400 (1974).

\bibitem{filtr}A.~M.~Vershik, The theory of filtrations of subalgebras, standardness, and independence,
{\it Russian Math. Surveys} {\bf 72,} No.~2, 257--333 (2017).

\bibitem{Three}A.~M.~Vershik, Three theorems on the uniqueness of the Plancherel measure from different viewpoints, {\it Proc. Steklov Inst. Math.} {\bf305},  63--77 (2019).

\bibitem{21} A.~M.~Vershik, On the justification of the ergodic method for describing central measures, to appear in {\it Dokl. Akad. Nauk}.

\bibitem{Markov}A.~M.~Vershik and N.~V.~Tsilevich, Markov measures on Young tableaux and induced representations of an infinite symmetric group,
{\it Theory Probab. Appl.} {\bf51}, No.~1,  211--223 (2006).


\end{thebibliography}
\end{document}